\newtheorem{theorem}{Theorem}[section]
\theoremstyle{theorem}
\newtheorem{corollary}[theorem]{Corollary}
\theoremstyle{theorem}
\newtheorem{lemma}[theorem]{Lemma}
\theoremstyle{definition}
\theoremstyle{theorem}
\theoremstyle{theorem}
\theoremstyle{theorem}
\theoremstyle{theorem}
\newcommand{\Z}{\mathbb Z}
\newcommand{\N}{\mathbb N}
\newcommand{\B}{\mathcal B}
\newcommand{\Pp}{\mathbb P}
\newcommand{\E}{\mathbb E}
\newcommand{\s}{\mathfrak s}
\newcommand{\mvec}{\mathbf m}
\newcommand{\svec}{\mathbf s}
\newcommand{\phivec}{\boldsymbol{\phi}}
\newcommand{\M}{\mathcal M}
\newcommand{\midcap}{{\,\textstyle \bigcap\,}}
\newcommand{\infnorm}[1]{\left\|#1\right\|_\infty}
\title{\bf Active Phase for Activated Random Walk on $\Z^2$}
\date{}
\author{Yiping Hu\thanks{University of Washington, Seattle, USA; huypken@uw.edu}}
\begin{document}

\maketitle

\begin{abstract}
	We show that for small enough sleep rate, the critical density of the symmetric Activated Random Walk model on $\Z^2$ is strictly less than one.
\end{abstract}

\section{Introduction}

The Activated Random Walk (ARW) model is a system of interacting particles, which is believed to exhibit self-organized criticality. In this paper, we study the dynamics of the simple ARW on the square lattice $\Z^2$. The process starts with a random number of particles at each site $x \in \Z^2$ distributed as a spatially ergodic distribution with average density $\zeta>0$. Each particle can be in one of two states, either active or sleepy. Initially all particles are active. Every active particle performs a continuous-time simple random walk at rate 1 until it falls asleep, which happens with sleep rate $\lambda$. A sleepy particle stays put until it becomes active, i.e. it gets reactivated whenever an active particle arrives at its site.

For every sleep rate $\lambda$, the system has an \textit{absorbing-state phase transition}: if the initial density $\zeta$ of particles is below a critical value $\zeta_c(\lambda)$, all particles eventually sleep and the system fixates; above $\zeta_c(\lambda)$, each particle moves indefinitely and the system stays active. It is intuitively clear and was proved in \cite{GA10, Shellef10} that $ \zeta_c(\lambda)\le 1$ for all $\lambda$ in all dimensions.

Our main result answers an open question about the existence of a non-trivial active phase on $\Z^2$.

\begin{theorem} \label{thm:active}
There is a constant $C>0$ such that for all $\lambda<1$, $$ \zeta_c(\lambda) \le \frac{C}{\ln(1/\lambda)}.$$ In other words, the system stays active a.s. if $\zeta > \frac{C}{\ln(1/\lambda)}$.
\end{theorem}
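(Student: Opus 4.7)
My strategy is to exploit the abelian property of ARW in order to design a convenient legal toppling sequence in a large box $B_L$, and to exhibit a supercritical branching process of reactivations at the origin whenever $\zeta\ln(1/\lambda)\ge C$. By the Diaconis-Fulton framework and the Rolla-Sidoravicius criterion for non-fixation on $\Z^2$, it suffices to show that the odometer $m_L(0)$ at the origin of the stabilization of $B_L$ (with particles leaving $B_L$ killed) tends to infinity in probability as $L\to\infty$; by abelianness, one is free to pick any legal order of topplings.

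The guiding heuristic is as follows. An isolated active particle performs a continuous-time simple random walk and falls asleep at rate $\lambda$, so its killed Green's function on $\Z^2$ satisfies $G_\lambda(y,0)\asymp \ln\bigl(1/(\lambda|y|^2\vee 1)\bigr)$, which is of order $\ln(1/\lambda)$ whenever $|y|\lesssim \lambda^{-1/2}$. Hence a single active particle released in the $\lambda^{-1/2}$-neighbourhood of a site $x$ visits $x$ on average $\Theta(\ln(1/\lambda))$ times before sleeping. With sleeping density $\zeta$, each such visit has probability $\Theta(\zeta)$ of meeting a sleeping pile and triggering a reactivation, so one active particle should trigger on the order of $\zeta\ln(1/\lambda)$ reactivations before terminating. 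When this quantity exceeds a sufficiently large constant, the reactivation cascade emanating from the origin stochastically dominates a supercritical Galton-Watson tree, yielding an unbounded odometer at $0$.

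To implement this I would use the site-wise Diaconis-Fulton representation (i.i.d.\ instruction sequences at each site) and execute topplings particle by particle in successive waves: in each wave, pick an active particle and run it as a continuous-time simple random walk killed upon its next sleep instruction, while deferring to a queue for the following wave every sleeping particle it reactivates along the way. I would partition $B_L$ into mesoscopic boxes of side $\lambda^{-1/2}$ and use a renormalization argument to guarantee that inside each such box the density of sleeping particles stays at least $\zeta/2$ throughout the relevant waves. A stochastic lower bound in terms of a Galton-Watson process with mean offspring $c\,\zeta\ln(1/\lambda)$ then gives $\Pp(m_L(0)\ge k)$ bounded from below uniformly in $L$ for each fixed $k$, which is enough to conclude non-fixation.

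\textbf{Main obstacle.} The hardest step will be the domination by the supercritical branching process. Particles interact: when several active particles sit on the same pile none of them sleeps at the naive rate $\lambda$, and as the cascade progresses sleeping particles get consumed so that the density entering the amplification estimate is time-dependent and correlated with the trajectories already explored. Taming this requires inspecting one particle's instructions at a time, freezing the rest of the configuration during each exploration, and producing a quantitative lower bound on the residual density of sleeping particles in each mesoscopic box that is uniform over the explored history. The abelian property and the site-wise independence of the instructions are the decoupling tools, but the bookkeeping of which particles have been inspected by the explorations so far, and the control of rare bad events where the exploration consumes too much of the sleeping mass, is the heart of the argument.
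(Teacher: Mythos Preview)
Your route is genuinely different from the paper's, and the step you flag as the ``main obstacle'' is in fact the entire proof; your plan does not close it.

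The paper never follows trajectories or reactivation cascades. It reduces Theorem~\ref{thm:active} to a finite-volume estimate (Theorem~\ref{thm:finite volume}): for any initial configuration in $\B_N$, the number of sleepers after stabilization exceeds $\tfrac{C}{\ln(1/\lambda)}|\B_N|$ with probability at most $e^{-cN^2}$. The argument is purely static. In the site-wise representation one separates movement instructions $(\xi^x_k)$ from independent sleep indicators $\chi_x(m)\sim\mathrm{Ber}(\lambda/(1+\lambda))$; whatever the activity odometer $\mathbf M$ turns out to be, the sleep count is bounded by $\sum_x\chi_x(M_x)$, whose Chernoff bound at level $\tfrac{C}{\ln(1/\lambda)}|\B_N|$ is $e^{-\Theta(C)|\B_N|}$ for each fixed $\mathbf M$. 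The only remaining work is to control the union over odometer profiles. A mass-balance equation at each site pins the odometer on each concentric cycle $C_n$ once the odometer and movement instructions on $C_{n-1}$ are revealed, up to a geometric ambiguity per site; peeling cycles inward yields the entropy bound $\sum_{\mvec,\svec,\phivec}\Pp(\text{mass balance holds})\le e^{c_1|\B_N|}$ (Lemma~\ref{lem:entropy bound}). Choosing $C$ large makes the product small. No branching, no depletion bookkeeping, no renormalization.

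On your plan itself: the passage from ``$G_\lambda(y,0)\asymp\ln(1/\lambda)$'' to ``one active particle triggers $\Theta(\zeta\ln(1/\lambda))$ reactivations'' is not justified. The Green's function counts expected visits to a \emph{single} site; the number of fresh sleepers a particle meets is governed by the \emph{range} of the walk before sleeping, which in two dimensions is of order $1/(\lambda\ln(1/\lambda))$. Multiplying by $\zeta$ gives a quantity that diverges as $\lambda\to 0$ for every $\zeta>0$, so the naive branching is always supercritical and does not single out the threshold $\zeta\asymp 1/\ln(1/\lambda)$. That threshold can only emerge once you account for depletion of sleepers by earlier generations and for the recurrence-driven overlap of their ranges, which is precisely the bookkeeping you defer. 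The exploration/branching arguments that exist in the literature (Sidoravicius--Teixeira, Taggi) rely on transience to make those overlaps negligible, and that tool is unavailable on $\Z^2$; absent a concrete mechanism for the ``density stays at least $\zeta/2$ in each mesoscopic box'' claim, the proposal is a heuristic rather than a proof.
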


\begin{corollary}
\label{cor:small lambda}
For small enough $\lambda$, the critical density $\zeta_c(\lambda)<1$.
\end{corollary}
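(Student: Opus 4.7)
The plan is to derive the corollary directly from Theorem~\ref{thm:active}; no additional probabilistic input, construction, or coupling is required. Theorem~\ref{thm:active} supplies an absolute constant $C>0$ with
\[
\zeta_c(\lambda)\;\le\;\frac{C}{\ln(1/\lambda)}\qquad\text{for every }\lambda\in(0,1).
\]
The right-hand side is a continuous, strictly decreasing function of $\lambda$ on $(0,1)$ that tends to $0$ as $\lambda\to 0^+$, so it must eventually fall strictly below $1$. The entire content of the corollary is to observe this.

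Quantitatively, I would solve $C/\ln(1/\lambda)<1$, which is equivalent to $\ln(1/\lambda)>C$, i.e.\ $\lambda<e^{-C}$. Fixing the threshold $\lambda_0:=e^{-C}$, where $C$ is the constant produced by Theorem~\ref{thm:active}, every $\lambda\in(0,\lambda_0)$ satisfies
\[
\zeta_c(\lambda)\;\le\;\frac{C}{\ln(1/\lambda)}\;<\;1,
\]
which is exactly the statement of the corollary. In particular, the threshold is explicit in terms of the constant delivered by the main theorem.

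There is essentially no obstacle at the level of the corollary itself; all of the difficulty is absorbed into the proof of Theorem~\ref{thm:active}. What the corollary is meant to record is that the quantitative bound of Theorem~\ref{thm:active} represents a genuine, strict improvement over the a priori inequality $\zeta_c(\lambda)\le 1$ recalled in the introduction, and hence certifies the existence of an active phase below density one on $\Z^2$ in the small-sleep-rate regime.
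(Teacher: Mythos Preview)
Your argument is correct and matches the paper's intent: the corollary is recorded without a separate proof precisely because it follows immediately from Theorem~\ref{thm:active} by choosing $\lambda$ small enough that $C/\ln(1/\lambda)<1$. Your explicit threshold $\lambda_0=e^{-C}$ is exactly what the paper leaves implicit.
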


\begin{corollary}
\label{cor:go to zero}
The critical density $\zeta_c(\lambda)\to 0$ as $\lambda \to 0$.	
\end{corollary}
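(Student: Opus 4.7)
The plan is to derive Corollary \ref{cor:go to zero} as an immediate consequence of Theorem \ref{thm:active}. Since the corollary only concerns the small-$\lambda$ regime, I can restrict attention to $\lambda<1$, where the theorem directly applies and gives the quantitative bound $\zeta_c(\lambda)\le C/\ln(1/\lambda)$.

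To finish I would simply sandwich: the critical density is nonnegative by definition (it is a density, and by the comparison stated in the introduction we already know $0\le \zeta_c(\lambda)\le 1$), so for $\lambda<1$,
\[
0 \;\le\; \zeta_c(\lambda) \;\le\; \frac{C}{\ln(1/\lambda)}.
\]
As $\lambda\to 0^+$ the quantity $\ln(1/\lambda)\to\infty$, so the upper bound tends to $0$, and the squeeze yields $\zeta_c(\lambda)\to 0$. There is no genuine obstacle here; all the difficulty lies in establishing Theorem \ref{thm:active}, after which the corollary is a one-line limit argument.
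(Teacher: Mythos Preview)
Your proposal is correct and matches the paper's intent: Corollary~\ref{cor:go to zero} is stated immediately after Theorem~\ref{thm:active} with no separate proof, precisely because it follows by the one-line squeeze $0\le \zeta_c(\lambda)\le C/\ln(1/\lambda)\to 0$ that you give.
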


Though we choose to demonstrate our proof in $\Z^2$, the same argument works in all dimensions.

In one dimension, similar results as Corollary \ref{cor:small lambda} and \ref{cor:go to zero} were first proved in the breakthrough work of \cite{BGH18}. The approach from \cite{BGH18} was later reformulated in \cite{Leo20} as based on two major ingredients: a mass balance equation between blocks and a single-block estimate. More recently, this reformulation has been adopted successfully in \cite{ARS19,HRR20} to find the right order of asymptotics in Corollary \ref{cor:go to zero} and extend Corollary \ref{cor:small lambda} for all $\lambda>0$ respectively. All these works rely crucially on the topological convenience of $\Z$ in order to maintain independence structures between blocks. However, we no longer enjoy such property in $\Z^2$, making the block arguments rather unwieldy. Similar results were also obtained in higher dimensions $d\ge3$ \cite{ST18, Taggi19} or for biased ARW \cite{Taggi16, RT18}, but these arguments only work for transient walks. See \cite{Leo20} for a more complete survey of results.

In this paper, we take a step back and consider the mass balance equations between \textit{sites} instead of blocks. A straightforward energy-entropy calculation is then employed. Our main goal is to show that a similar line of arguments works in all dimensions and yields non-trivial results. The argument also provides simpler proofs and a handy framework for further understanding.

We note that another proof of the non-trivial active phase in two dimensions was obtained independently by Forien and Gaudilli\` ere \cite{FG22}. The asymptotic upper bound in Theorem \ref{thm:active} is comparable to the one from \cite[Theorem 2]{FG22}.

In order to establish Theorem \ref{thm:active}, it suffices to prove the following quantitative estimate for the \textit{finite-volume dynamics}. For any positive integer $N$, let $\mathcal B_N:= \{-N,\dots,N\}^2$ be the finite box in $\Z^2$ and let $|\B_N|$ be the cardinality of $\B_N$. Consider the ARW dynamics on $\mathcal B_N$ where the walks are killed upon leaving $\mathcal B_N$. Without loss of generality, we only consider the initial configurations with at most $|\B_N|$ particles in $B_N$. The finite-volume dynamics eventually stabilizes, that is when all particles that remain in $\B_N$ are sleeping. We call $S(\B_N)$ the number of sleeping particles in $\B_N$ after stabilization.

\begin{theorem}\label{thm:finite volume}
	There exist positive constants $C$ and $c$, such that for any integer $N$ and any initial configuration in $\mathcal B_N$,
	$$\Pp\left(S(\B_N) \ge \frac{C}{\ln(1/\lambda)}|\B_N|\right) \le e^{-cN^2}.$$
\end{theorem}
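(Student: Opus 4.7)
The plan is an energy–entropy computation based on the abelian property of ARW and a site-wise mass-balance equation. First, using the Diaconis–Fulton representation, I place at each site $x\in\B_N$ an independent stack of i.i.d.\ instructions: each instruction is a \emph{sleep mark} with probability $p:=\lambda/(1+\lambda)$ and otherwise a uniform neighbor jump. By the abelian property, the dormant set $\mathcal S$ and the odometer $u_x$ (= number of instructions consumed at $x$) are deterministic functions of the stacks and the initial configuration. Writing $f(x,y)$ for the number of particles sent from $x$ to $y$ during stabilization, conservation at each $x$ reads
\begin{equation*}
\eta_x^{\text{init}} + \sum_{y\sim x}\bigl(f(y,x)-f(x,y)\bigr) \;=\; \mathbbm{1}_{x\in\mathcal S},
\end{equation*}
with $u_x$ equal to $\sum_{y\sim x} f(x,y)$ plus the number of sleep instructions consumed at $x$. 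This is the site-wise mass balance that replaces the block-level balance used in the one-dimensional arguments.

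The heart of the argument is to bound $\Pp(\mathcal S = S)$ for a given candidate dormant set $S\subset\B_N$ and then union bound over $S$. On $\{\mathcal S = S\}$, each $x\in S$ halts with a single dormant particle, which forces the last consumed instruction at $x$ to be a sleep mark; since the instructions are i.i.d.\ on each stack, this supplies an \emph{energy} factor of order $p^{|S|}$. The \emph{entropy} factor counts flow/odometer profiles compatible with $\{\mathcal S = S\}$, and here the site-wise balance plays its part: I expect a profile to be specifiable using $O(1)$ bits per site once its incoming flows are known, giving an entropy factor of order $K^{|S|}$ for some absolute constant $K$. Combining and summing over $S$ with $s:=C|\B_N|/\ln(1/\lambda)$,
\begin{equation*}
\Pp\bigl(S(\B_N)\ge s\bigr) \;\le\; \sum_{k\ge s}\binom{|\B_N|}{k}(Kp)^k \;\le\; \sum_{k\ge s}\Bigl(\tfrac{eK\lambda\ln(1/\lambda)}{C}\Bigr)^{\!k},
\end{equation*}
and for $C$ large enough the geometric series is bounded by $e^{-cN^2}$.

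The true obstacle is the entropy bound $K^{|S|}$. In $\Z$, integer flows along the line are essentially monotone and block arguments preserve independence; in $\Z^d$ with $d\ge 3$, transience of the walk supplies the needed independence between distant sites. In $\Z^2$ the walk is recurrent, so neither shortcut is available and the flows $f$ can carry complicated long-range correlations. Showing that the \emph{local} mass balance nevertheless suffices — that specifying the in/out flow values at a site almost determines the relevant trajectory data up to $O(1)$ choices — is where I expect the main technical work to sit, and presumably encodes the paper's key conceptual contribution.
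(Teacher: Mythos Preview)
Your energy--entropy outline has the right shape, but two concrete choices create gaps relative to the paper. First, you decompose over dormant sets $S$ and aim for $\Pp(\mathcal S=S)\le (Kp)^{|S|}$, whereas the paper decomposes over the full triple $(\mvec,\svec,\phivec)$ of movement odometer, sleep field, and exit measure. Your own phrase ``$O(1)$ bits per site'' already points to entropy $K^{|\B_N|}$, not $K^{|S|}$; the $K^{|S|}$ claim is unsupported and is not what the paper proves. The paper's entropy lemma gives $\sum_{\mvec,\svec,\phivec}\Pp\big(\bigcap_x\M(x,\mvec,\svec,\phivec)\big)\le e^{c_1|\B_N|}$, established by a layer-by-layer induction over the concentric cycles $C_n=\{\infnorm{x}=N-n\}$: conditioned on the movement stacks in the outer layers, mass balance at each $x\in C_{n-1}$ fixes the value of $n_{y,x}(m_y)$ for its unique inward neighbor $y\in C_n$, and the number of compatible $m_y$ is stochastically dominated by an independent geometric with mean $4$, yielding $4^{|C_n|}$ per layer. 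The matching energy is then not $p^{|S|}$ but a Chernoff bound on $\sum_x \chi_x(m_x)$, uniform in $\mvec$, of size $e^{-D_{\text{KL}}(C/\ln(1/\lambda)\,\|\,\lambda/(1+\lambda))\,|\B_N|}$; for large $C$ this exponent beats $c_1$.

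Second, the independence you need to extract the energy factor is not available in the standard Diaconis--Fulton stack: the event ``the $u_x$-th instruction is a sleep'' is entangled with the jump instructions through the random odometer $u_x$, so the $p^{|S|}$ factor does not peel off cleanly. The paper sidesteps this with a \emph{modified} site-wise representation in which each site carries one stack of pure movement instructions $(\xi_k^x)$ and a separate, independent stack of geometrics $(g_k^x)$ counting sleeps between consecutive movements. With this decoupling, $\chi_x(m)=1\{g_m^x>0\}$ is genuinely independent of all movement counts $n_{x,y}(\cdot)$, so the factorization $\Pp\big(\mathcal A(\mathbf m)\cap\bigcap_x\M\big)\le\Pp(\mathcal A(\mathbf m))\,\Pp\big(\bigcap_x\M\big)$ is exact, and the energy--entropy competition plays out on the scale $|\B_N|$ rather than $|S|$.
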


Theorem \ref{thm:finite volume}, combined with the criterion \cite[Theorem 2.11]{Leo20} and the universality result \cite{RSZ19}, gives a proof of Theorem \ref{thm:active}. The rest of the paper is devoted to the proof of \ref{thm:finite volume}.

\section{Modified site-wise representation}
\label{sec:site-wise}

Formally, the configuration of the system at time $t \ge 0$ is given by $\eta_t \in \{0,\s,1,2,\dots\}^{\Z^2}$, where $\s$ represents a single sleeping particle and we have $0 < \s < 1$. Write $x \sim y$ if $x$ is a neighbor of $y$. In the continuous-time ARW, for $x \in \Z^2$ such that $\eta_{t-}(x)\ge 1$, the system undergoes transitions $\eta_t = \mathfrak t_{x,y} \eta_{t-}$ and $\eta_t = \mathfrak t_{x,\s} \eta_{t-}$ at rates $\frac{1}{4}\eta_{t-}(x)$ and $\lambda\eta_{t-}(x)$ respectively. For $x \sim y$, the \textit{movement} transition $\mathfrak t_{x,y}$ is defined by $$\mathfrak t_{x,y}(\eta)(z)=\begin{cases}
\eta(y)+1,&z=y,\\
\eta(x)-1,&z=x,\\
\eta(z),&\text{otherwise},	
\end{cases}
$$
where we use the convention $\s+1=2$. The \textit{sleeping} instruction is given by $$
\mathfrak t_{x,\s}(\eta)(z)=\begin{cases}
\s,&z=x \text{ and }\eta(x)=1,\\
\eta(z),&\text{otherwise.}	
\end{cases}
$$

It is useful to consider the site-wise representation of ARW. However, we will use a slight modification of the usual site-wise representation, cf.\cite{Leo20}. In our setting, each site $x\in \Z^2$ is associated with one stack of i.i.d. movement instructions $\left( \xi_k^x\right)_{k\in\N^+}$, together with another independent stack of i.i.d. geometric random variables $\left( g_k^x\right)_{k\in\N}$. More specifically, each movement $\xi_k^x$ has the distribution of $\mathfrak t_{x,y_k}$, where $y_k$ is a uniformly random neighbor of $x$, while each geometric $g_k^x$ encodes the number of sleep instructions between consecutive movements $\xi_k^x$ and $\xi_{k+1}^x$, thus taking value in $\{0,1,2,\dots\}$ with success probability $1/(1+\lambda)$.

It has been proved \cite[Section 11.2]{Leo20} that the stacks of instructions $\xi$ and $g$, with poisson clocks attached to every site, gives an explicit construction of the ARW process $(\eta_t)_{t\ge0}$ on $\Z^2$. We will not need the Abelian property of the ARW in the remaining sections, but note that essential inputs of Theorem \ref{thm:active}, including \cite[Theorem 2.11]{Leo20}, \cite{RSZ19} and \cite[Section 11.2]{Leo20}, all use the Abelian property in their proofs.

For $x \sim y$, let $n_{x,y}(m)$ be the number of times that $\mathfrak t_{x,y}$ appears in the first $m$ movement instructions $\left( \xi_k^x\right)_{k=1}^m$. Also let $\chi_x(m):=1\{g_m^x>0\}$ be the Bernoulli random variable with parameter $\lambda/(1+\lambda)$.

\section{Mass balance equation}

In this section we focus on the finite-volume dynamics and use the mass balance equation to prove Theorem \ref{thm:finite volume}.

We start by introducing three \textit{random} fields $\mathbf M=(M_x)\in\N^{\B_N}$, $\mathbf S=(S_x)\in\{0,1\}^{\B_N}$ and $\boldsymbol \Phi=(\Phi_x)\in\N^{\partial\B_N}$. First, define the \textit{activity odometer field} $M_x$ which counts the number of movement instructions used at $x$ until stabilization. Note that in $M_x$, we do \textit{not} count any sleep instruction. Secondly, let $S_x$ be the indicator random variable of there being a sleeping particle at $x$ after stabilization. Thirdly, define the \textit{exit measure} $\Phi_x$ to be the number of particles killed upon entering a site $x\in \partial B_N$. Here, for any $A \subseteq \Z^2$, we define the boundary set $\partial A:= \{x\in A^c; \text{there exists } y \in A \text{ such that } y \sim x\}$.

Now suppose we're given \textit{deterministic} fields $\mvec=(m_x)\in\N^{\B_N}$, $\svec=(s_x)\in\{0,1\}^{\B_N}$ and $\phivec=(\phi_x)\in\N^{\partial\B_N}$. A tuple of fields $(\mvec,\svec,\phivec)$ is said to satisfy the \textit{mass balance equation} if
\begin{enumerate}[(i)]
\item the mass balance equation holds at every $x \in \B_N$:
\begin{equation} \label{eq:mass balance}
\eta_0(x) + \sum_{y \sim x} n_{y,x}(m_y) = m_x + s_x,
\end{equation}
where $\eta_0$ denotes the initial configuration;
\item the \textit{boundary condition} is met at every $x\in\partial\B_N$:
\begin{equation}\label{eq:boundary condition}
	n_{R(x),x}(m_{R(x)}) = \phi_x,
\end{equation}
where $R(x)$ is the unique neighbor of $x$ contained in $\B_N$.
\end{enumerate}
Note that the tuple of random fields $(\mathbf M, \mathbf S, \boldsymbol \Phi)$ defined above always satisfies the mass balance equation.

To prove Theorem \ref{thm:finite volume}, we rely on the following entropy bound on all possible field configurations satisfying the mass balance equation. Denote by $\M(x,\mvec,\svec,\phivec)$ the event where equation (\ref{eq:mass balance}) holds at $x\in\B_N$; for $x\in\partial\B_N$, we use the same notation for the event where equation (\ref{eq:boundary condition}) holds at $x$. Also note that $\infnorm{\boldsymbol \Phi}\le|\eta_0|$, where $|\eta_0|:=\sum_{x\in\B_N}\eta_0(x)$.

\begin{lemma}
\label{lem:entropy bound}
There exists $c_1>0$ such that for any $N\in\N^+$ and initial configuration $
\eta_0$ in $\mathcal B_N$ with $|\eta_0|\le|B_N|$,
\begin{equation}
\label{eq:entropy bound}
\sum_{\mvec, \svec, \phivec}\Pp\Bigg(\bigcap\limits_{x\in\B_N \cup \partial\B_N} \M(x,\mvec,\svec,\phivec)\Bigg) \le e^{c_1 |\B_N|},	
\end{equation}
where the summation is over $\infnorm\phivec\le |\eta_0|$ and arbitrary $\mvec,\svec$.
\end{lemma}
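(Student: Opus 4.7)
The plan is to reduce the triple sum to a single sum over the odometer field $\mvec$, and then control it via an entropy argument that exploits the bipartite structure of $\Z^2$.

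First, I reduce the sum. For any fixed $(\mvec, \omega)$, equations (\ref{eq:mass balance}) and (\ref{eq:boundary condition}) uniquely determine $\phi_x = n_{R(x), x}(m_{R(x)})$ and $s_x = \eta_0(x) + \sum_{y \sim x} n_{y, x}(m_y) - m_x$; the tuple $(\mvec, \svec, \phivec)$ lies in the summation range iff $s_x \in \{0, 1\}$ for every $x \in \B_N$, since the bound $\infnorm{\phivec} \le |\eta_0|$ follows automatically from the conservation identity $|\svec| + |\phivec| = |\eta_0|$ obtained by summing (\ref{eq:mass balance}) over $\B_N$. Therefore
\[
\sum_{\mvec, \svec, \phivec}\Pp\!\left(\bigcap_x\M\right) = \sum_{\mvec \in \N^{\B_N}}\Pp\!\left(s_x(\mvec) \in \{0, 1\}\ \forall x \in \B_N\right).
\]

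Next, I exploit bipartiteness. Splitting $\B_N = E \sqcup O$ into even and odd sites of $\Z^2$, the random variable $s_x(\mvec)$ for $x \in E$ depends only on the movement stacks at neighbors of $x$, all of which lie in $O$; by independence of stacks across different sites,
\[
\Pp\!\left(s_x \in \{0, 1\}\ \forall x\right) = \Pp\!\left(s_x \in \{0, 1\}\ \forall x \in E\right)\cdot\Pp\!\left(s_x \in \{0, 1\}\ \forall x \in O\right).
\]
For each half, conditioning on the opposite stacks makes the relevant events deterministic: the event $\{s_x \in \{0, 1\}\}$ at $x \in E$ pins $m_x$ to one of at most two values determined by $V_x(\mvec) := \sum_{y \sim x} n_{y, x}(m_y)$. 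This yields $\sum_{(m_x)_{x \in E}}\Pp(\bigcap_E) \le 2^{|E|}$ uniformly over $(m_y)_{y \in O}$, and symmetrically $\sum_{(m_x)_{x \in O}}\Pp(\bigcap_O) \le 2^{|O|}$ uniformly over $(m_y)_{y \in E}$.

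The main obstacle is the last step: these one-sided bounds hold uniformly only over the opposite half, so naively iterating them produces an infinite double sum. To close the argument, I plan to combine the two halves via a Cauchy--Schwarz or H\"older-type decoupling together with a quantitative local CLT estimate on $\Pp(\bigcap_E)$, or, more cleanly, to rewrite each multinomial weight $\binom{m_y}{n_y}4^{-m_y}$ as the ratio of two independent Poisson probabilities at a well-chosen rate $\mu$. In the Poisson-tilted picture the mass balance becomes a discrete divergence equation whose solutions are concentrated in a window of size $O(\sqrt{\mu})$, and a Gaussian-type anticoncentration estimate for the divergence together with Stirling's approximation for the Poisson normalization should deliver the claimed bound $e^{c_1 |\B_N|}$.
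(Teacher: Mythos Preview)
Your reduction of the triple sum to a sum over $\mvec$ alone is correct and in fact slightly sharper than what the paper does (the paper simply pays an extra factor $2^{|\B_N|}\cdot|\B_N|^{|\partial\B_N|}$ for the $\svec,\phivec$ sums at the end). The bipartite factorization $\Pp(\bigcap_{\B_N})=\Pp(\bigcap_E)\,\Pp(\bigcap_O)$ is also valid, as is each one-sided bound $\sum_{\mvec_E}1_{\bigcap_E}\le 2^{|E|}$.

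The genuine gap is precisely the step you label the ``main obstacle'', and the fixes you sketch do not close it. Write $f(\mvec_E,\mvec_O)=\Pp(\bigcap_E)$ and $g(\mvec_E,\mvec_O)=\Pp(\bigcap_O)$. You know $\sum_{\mvec_E}f\le 2^{|E|}$ uniformly in $\mvec_O$ and $\sum_{\mvec_O}g\le 2^{|O|}$ uniformly in $\mvec_E$, but these marginal bounds cannot control $\sum_{\mvec_E,\mvec_O}fg$: already the toy example $f(i,j)=g(i,j)=1_{\{i=j\}}$ on $\N\times\N$ satisfies both marginal bounds with constant $1$, yet $\sum_{i,j}fg=\infty$. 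Cauchy--Schwarz or H\"older still leaves one unconstrained infinite sum, and the Poisson tilting/CLT route would require an a priori localization of each $m_x$ that you do not have. The difficulty is structural: the even/odd dependency is two-way, so no iteration can terminate.

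The paper's proof replaces the bipartite split by a \emph{one-way} peeling that makes the dependency acyclic. Partition $\B_N=\coprod_{n=0}^N C_n$ into $\ell^\infty$-cycles $C_n=\{\|x\|_\infty=N-n\}$, set $C_{-1}=\partial\B_N$, and let $\mathcal F_{n-1}$ be generated by the stacks in $C_0\cup\cdots\cup C_{n-1}$. Each $y\in C_n$ has an (injectively chosen) backward neighbor $B(y)\in C_{n-1}$; since $B(y)$ has no other neighbor in $C_n$, the mass-balance (or boundary) equation at $B(y)$ becomes, conditionally on $\mathcal F_{n-1}$ and for fixed $\mvec_0,\dots,\mvec_{n-1}$, a constraint of the form $n_{y,B(y)}(m_y)=k$ with $k$ determined. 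The number of $m_y\in\N$ satisfying this is a geometric$(1/4)$ variable, so
\[
\E\Big[\sum_{\mvec_n}\prod_{y\in C_n}1_{\M(B(y),\mvec,\svec,\phivec)}\,\Big|\,\mathcal F_{n-1}\Big]\le 4^{|C_n|},
\]
and iterating from $n=0$ to $N$ gives $\sum_{\mvec}\Pp(\bigcap_x\M)\le 4^{|\B_N|}$. The key idea you are missing is this acyclic ordering of the sites; once you have it, the argument is a two-line induction.
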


The proof of Lemma \ref{lem:entropy bound} will be given in the next section.

\begin{proof}[Proof of Theorem \ref{thm:finite volume}]
Recall from Theorem \ref{thm:finite volume} that $S(\B_N)$ represents the number of sleeping particles in $\B_N$ after stabilization, so we have $S(\B_N)=\sum_{x \in\B_N} S_x$. Note also that \begin{equation*}
	\label{eq:S_x upper bound}
	S_x\le\chi_x(M_x), 	
 	\end{equation*}
where $\chi_x(m)$ is defined in Section \ref{sec:site-wise}.
Thus in order to prove Theorem \ref{thm:finite volume}, it suffices to upper bound $\Pp(\mathcal A(\mathbf M))$, where $\mathbf M$ is the odometer field, and for a fixed $\mvec$, we use $\mathcal A(\mvec)$ to denote the event that $$ \sum_{x\in\B_N} \chi_x(m_x) \ge \frac{C}{\ln(1/\lambda)}|\B_N|.$$ By decomposing over the fields we get $\Pp(\mathcal A(\mathbf M))$ is equal to
\begin{align}
&\sum_{\mvec, \svec, \phivec}\Pp\Bigg(\mathcal A(\mathbf m) \midcap \{(\mvec,\svec,\phivec) = (\mathbf M, \mathbf S, \boldsymbol \Phi)\} \midcap \bigcap\limits_{x\in\B_N \cup \partial\B_N} \M(x,\mvec,\svec,\phivec)\Bigg) \nonumber\\
&\le \sum_{\mvec, \svec, \phivec}\Pp\Bigg(\mathcal A(\mathbf m) \midcap  \bigcap\limits_{x\in\B_N \cup \partial\B_N} \M(x,\mvec,\svec,\phivec)\Bigg) \nonumber\\
&\le \sum_{\mvec, \svec, \phivec}\Pp\left(\mathcal A(\mathbf m)\right)  \Pp\Bigg(\bigcap\limits_{x\in\B_N \cup \partial\B_N} \M(x,\mvec,\svec,\phivec)\Bigg), \label{eq:independence}
\end{align}
where in the last inequality we use the independence between $\chi_x(m)$'s and $n_{x,y}(m)$'s. For any fixed $\mvec$, the probability of $\mathcal A(\mvec)$ is controlled via Chernoff bound by
\begin{equation}
\label{eq:Chernoff bound}
e^{-D_{\text{KL}}\left(\frac{C}{\ln(1/\lambda)} \big{\|} \frac{\lambda}{1+\lambda}\right)\,|\B_N|}
\end{equation}
where $$D_{\text{KL}}(p_1\|\,p_2)=p_1\ln \frac{p_1}{p_2}+(1-p_1)\ln \left(\frac{1-p_1}{1-p_2}\right)$$ is the Kullback-Leibler divergence between Bernoulli distributed random variables with parameters $p_1$ and $p_2$ respectively.
By combining (\ref{eq:independence}), (\ref{eq:Chernoff bound}) and Lemma \ref{lem:entropy bound} and picking a large enough $C$, we complete the proof of Theorem \ref{thm:finite volume}.
\end{proof}

\section{Entropy bound}
In this section we prove the entropy bound in Lemma \ref{lem:entropy bound}. Partition $\B_N$ into a set of cycles $\B_N=\coprod_{n=0}^N C_n$, where each $C_n:=\{x \in \Z^2; \infnorm{x}=N-n\}$. Set $C_{-1}:=\partial\B_N$. Recall from Section \ref{sec:site-wise} that there is a stack of movement instruction $(\xi_k^x)_{k\in\N^+}$ at every $x\in\B_N$. Consider the filtration $(\mathcal F_n)_{n=-1}^N$, where $\mathcal F_{-1}$ is the trivial $\sigma$-field and for $n \ge 0$, each $\mathcal F_n$ is the $\sigma$-field generated by the instructions $\{\xi_k^x; k\in\N^+,x\in C_i,0\le i \le n\}$ in the outermost $n+1$ cycles.

\begin{lemma}
\label{lem:conditional entropy bound}
Fix $N, \eta_0, \svec$ and $\phivec$. For every $n=0,\dots,N$, a.s.,
\begin{equation}
\label{eq:conditional entropy bound}
\E\bigg[\,\sum_{\mvec_n}\prod_{x \in C_{n-1}}1_{\mathcal M(x,\mvec,\svec,\phivec)}\,\bigg\vert\, \mathcal F_{n-1}\bigg] \le 4^{|C_n|},
\end{equation}
where $\mvec_n:=(m_x)_{x\in C_n}$ is the restriction of $\mvec$ to $C_n$, and the summation is over all $\mvec_n\in\N^{C_n}$.
\end{lemma}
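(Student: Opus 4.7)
The plan is to condition on $\mathcal F_{n-1}$, which freezes the movement stacks at sites in $C_0,\dots,C_{n-1}$, and to exploit the fact that for each $x\in C_{n-1}$ the only term in \eqref{eq:mass balance} that is not $\mathcal F_{n-1}$-measurable comes from the neighbors $y\sim x$ that lie in $C_n$. First I would rewrite \eqref{eq:mass balance} (or \eqref{eq:boundary condition} when $n=0$) as a single linear constraint
\[\sum_{y\sim x,\;y\in C_n} n_{y,x}(m_y)=\beta_x,\]
where $\beta_x$ is an integer fully determined by $\mathcal F_{n-1}$ together with the fixed data $\eta_0,\svec,\phivec$ and the values $m_y$ for $y\in C_{n-2}\cup C_{n-1}$, which we treat as frozen at this level of the iteration. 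Each indicator $1_{\mathcal M(x,\cdot)}$ thus becomes a function only of the summation variables $m_y$ for the neighbors $y$ of $x$ in $C_n$ and of the stacks at those $y$.

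A direct geometric check of the annuli shows that every $x\in C_{n-1}$ has at most one neighbor in $C_n$, with none if $x$ is one of the four corners of $C_{n-1}$; the analogous statement (without any corner exception) holds for $x\in\partial\B_N$ and its unique neighbor $R(x)\in C_0$ when $n=0$. Consequently, after dropping the deterministic $\{0,1\}$-factors from corner $x$'s, the remaining product over $x\in C_{n-1}$ factorizes as
\[\prod_{y\in C_n}\prod_{x\sim y,\;x\in C_{n-1}}1_{n_{y,x}(m_y)=\beta_x}.\]
Since the movement stacks at distinct $y\in C_n$ are independent of each other and of $\mathcal F_{n-1}$, the conditional expectation distributes over $y\in C_n$.

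This reduces the lemma to a uniform single-site estimate: for any $y\in C_n$ with $c\in\{1,2,3,4\}$ constraints of the form $n_{y,x_i}(m_y)=\beta_i$,
\[\E\Bigl[\sum_{m_y\ge 0}\prod_{i=1}^{c}1_{n_{y,x_i}(m_y)=\beta_i}\Bigr]\le 4.\]
For $c=1$ the inner sum equals the gap between consecutive $x$-directed entries of the stack at $y$, which is geometric with mean $4$; for larger $c$ the analogous waiting-time calculation, obtained by passing to the subsequence of the $c$ constrained directions, gives an even smaller bound. Multiplying the single-site bound over all $y\in C_n$ yields $4^{|C_n|}$.

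I do not expect a conceptual difficulty; the most delicate step is the geometric bookkeeping in the second paragraph, in particular the degenerate case $n=N$, where $C_N=\{0\}$ and the single point $y$ carries $c=4$ constraints, and the verification that the two neighbors in $C_{n-1}$ of any corner of $C_n$ are themselves non-corners of $C_{n-1}$, so that the corner $x$'s and corner $y$'s do not interact in a way that breaks the factorization.
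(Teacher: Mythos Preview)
Your proof is correct and follows essentially the same approach as the paper: factorize the sum over $\mvec_n$ through the independent movement stacks at the sites $y\in C_n$, and bound each factor by the expected waiting time between successive hits of a fixed direction, which is $4$. The only cosmetic difference is that the paper introduces a backward neighbor map $B:C_n\to C_{n-1}$ and discards all constraints at $x\notin B(C_n)$, thereby reducing immediately to a single constraint per $y$ and avoiding your case analysis on $c\in\{1,2,3,4\}$; since the extra constraints you keep can only shrink the inner sum, both routes give the same bound $4^{|C_n|}$.
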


\begin{proof}
Define the backward neighbor function $B$ that maps each $y\in C_n$ to a neighbor of $y$ in $C_{n-1}$; whenever there are multiple such neighbors, just pick one arbitrarily. Note that $B$ is a bijection from $C_n$ to $B(C_n)$. Rather than working with (\ref{eq:conditional entropy bound}), it suffices to give an upper bound of a slightly larger expression
\begin{equation}
\label{eq:upper}
\E\bigg[\,\sum_{\mvec_n}\prod_{y \in C_n}1_{\mathcal M(B(y),\mvec,\svec,\phivec)}\,\bigg\vert\, \mathcal F_{n-1}\bigg].
\end{equation}
By either the mass balance equation (\ref{eq:mass balance}) when $n\ge 1$ or the  boundary condition (\ref{eq:boundary condition}) when $n=0$, the event $\mathcal M(B(y),\mvec,\svec,\phivec)$ above depends on $\mvec_n$ only through $m_y$. Thus we may rewrite (\ref{eq:upper}) as
\begin{equation}
\label{eq:exchange}
\E\bigg[\,\prod_{y \in C_n} \sum_{m_y}1_{\mathcal M(B(y),\mvec,\svec,\phivec)}\,\bigg\vert\, \mathcal F_{n-1}\bigg].
\end{equation}
Now note that all randomness defining $\mathcal M(B(y),\mvec,\svec,\phivec)$, except for $n_{y,B(y)}(m_y)$, are measurable with respect to $\mathcal F_{n-1}$. Therefore, the sums over $m_y$ in (\ref{eq:exchange}), conditioned on $\mathcal F_{n-1}$, are stochastically dominated by i.i.d. geometric random variables with parameter $1/4$. This implies that both (\ref{eq:exchange}) and (\ref{eq:upper}) are controlled by $4^{|C_n|}$, thus proving (\ref{eq:conditional entropy bound}).
\end{proof}

\begin{proof}[Proof of Lemma \ref{lem:entropy bound}]
We will inductively show that for $0\le n \le N$,
\begin{equation}
\label{eq:induction}
\E\bigg[\,\sum_{\mvec_0}\sum_{\mvec_1}\cdots\sum_{\mvec_n} \prod_{x\in A_{n-1}}1_{\mathcal M(x,\mvec,\svec,\phivec)}\bigg] \le 4^{|A_n\setminus A_{-1}|},
\end{equation}
where $A_n:=\bigcup_{i=-1}^n C_i$. Assuming (\ref{eq:induction}) for $n=N$, it follows that the left-hand side of (\ref{eq:entropy bound}) is bounded above by $4^{|\B_N|}\cdot 2^{|\B_N|}\cdot |\B_N|^{|\partial\B_N|}$. This would imply Lemma \ref{lem:entropy bound} by choosing $c_1$ large enough.

It remains to prove (\ref{eq:induction}) by induction. The base case of $n=0$ is nothing but Lemma \ref{lem:conditional entropy bound} when $n=0$. Suppose the inequality is true for $n-1$. By conditioning on $\mathcal F_{n-1}$ and using Lemma \ref{lem:conditional entropy bound}, we get
\begin{align*}
\E\bigg[\,\sum_{\mvec_0}&\sum_{\mvec_1}\cdots\sum_{\mvec_{n}} \prod_{x\in A_{n-1}}1_{\mathcal M(x,\mvec,\svec,\phivec)}\bigg]\\
&= \E\bigg[\,\sum_{\mvec_0}\sum_{\mvec_1}\cdots\sum_{\mvec_{n-1}} \prod_{x\in A_{n-2}}1_{\mathcal M(x,\mvec,\svec,\phivec)}\times\\
&\hspace{.7in} \E\bigg[\,\sum_{\mvec_n}\prod_{x \in C_{n-1}}1_{\mathcal M(x,\mvec,\svec,\phivec)}\,\bigg\vert\, \mathcal F_{n-1}\bigg] \bigg]\\
&\le 4^{|C_n|} \cdot \E\bigg[\,\sum_{\mvec_0}\sum_{\mvec_1}\cdots\sum_{\mvec_{n-1}} \prod_{x\in A_{n-2}}1_{\mathcal M(x,\mvec,\svec,\phivec)}\bigg]\\
&\le 4^{|C_n|}\cdot 4^{|A_{n-1}\setminus A_{-1}|} = 4^{|A_n \setminus A_{-1}|}.
\end{align*}
\end{proof}

\subsection*{Acknowledgement}

The author would like to thank Christopher Hoffman for many useful comments and helpful conversations. Y.H. was supported by NSF grant DMS-1444084. 

\bibliography{ARW}{} 
\bibliographystyle{plain}
\end{document}